\documentclass[letterpaper]{ieeeconf}
\IEEEoverridecommandlockouts                              
\usepackage[utf8]{inputenc}
\usepackage{textcomp}

\usepackage{graphicx}
\usepackage{url}
\usepackage[table]{xcolor}
\usepackage{amsmath}
\usepackage[version=4]{mhchem}
\usepackage{siunitx}
\usepackage{graphicx}
\usepackage{longtable,tabularx}
\usepackage[style=ieee,doi=true]{biblatex}
\usepackage{tikz}
\usepackage{comment}
\usetikzlibrary{positioning, arrows.meta}
\AtBeginBibliography{\small}

\usepackage{amsmath} 
\usepackage{amsfonts}
\usepackage{amsthm}

\newtheorem{theorem}{Theorem}

\newtheorem{proposition}{Proposition}[section]
\theoremstyle{definition}

\newtheorem{remark}{Remark}

\usepackage{float} 
\usepackage[skip=0em,font=footnotesize]{caption}

\usepackage[margin = 1in]{geometry}

\usepackage{tikz}
\usetikzlibrary{shapes,arrows,calc,positioning}
\tikzstyle{bigblock} = [draw, fill=blue!20, rectangle, 
    minimum height=6em, minimum width=8em]
\tikzstyle{medblock} = [draw, fill=blue!20, rectangle, 
    minimum height=4em, minimum width=4em]    
\tikzstyle{mux} = [draw, fill=black!20, rectangle, 
    minimum height=5em, minimum width=0.1em]    
\tikzstyle{smallblock} = [draw, fill=blue!20, rectangle, 
    minimum height=2em, minimum width=3em]
    
\tikzstyle{data_block} = [draw, fill=green!20, rectangle, 
    minimum height=2em, minimum width=3em]
\tikzstyle{ops_block} = [draw, fill=blue!20, rectangle, 
    minimum height=2em, minimum width=3em]    
\tikzstyle{est_block} = [draw, fill=red!20, rectangle, 
    minimum height=2em, minimum width=3em]    
    
\tikzstyle{sum} = [draw, fill=blue!20, circle, node distance=1cm,minimum height=0.5cm]
\tikzstyle{signal} = [coordinate]
\tikzstyle{pinstyle} = [pin edge={to-,thin,black}]
\tikzstyle{block} = [draw, fill=blue!20, rectangle, 
    minimum height=3em, minimum width=9em]
\tikzstyle{blockS} = [draw, fill=blue!20, rectangle, 
    minimum height=3em, minimum width=4em]    
\tikzstyle{input} = [coordinate]
\tikzstyle{output} = [coordinate]
\usetikzlibrary{matrix}

\usetikzlibrary{positioning}
\usetikzlibrary{math}

\usepackage{hyperref}
\usepackage{xcolor}
\hypersetup{
    colorlinks,
    linkcolor={blue!100!black},
    citecolor={blue!50!black},
    urlcolor={blue!80!black}
}

\newcommand{\bc}{\begin{center}}
\newcommand{\ec}{\end{center}}
\newcommand{\benum}{\begin{enumerate}}
\newcommand{\eenum}{\end{enumerate}}
\newcommand{\nn}{\nonumber}
\newcommand{\matl}{\left[ \begin{array}}
\newcommand{\matr}{\end{array} \right]}

\renewcommand{\matl}{\begin{bmatrix}}
\renewcommand{\matr}{\end{bmatrix}}

\newcommand{\matls}{\left[ \begin{smallmatrix}}
\newcommand{\matrs}{\end{smallmatrix} \right]}
\newcommand{\isdef}{\stackrel{\triangle}{=}}

\newcommand{\rmI}{{\rm I}}

\newcommand{\rmd}{{\rm d}}

\newcommand{\rms}{{\rm s}}

\newcommand{\BBR}{{\mathbb R}}

\newcommand{\SA}{{\mathcal A}}

\newcommand{\SC}{{\mathcal C}}

\newcommand{\SF}{{\mathcal F}}

\newcommand{\SSS}{{\mathcal S}}

\usepackage{titlesec}
\titlespacing{\section}{0pt}{6pt}{3pt}
\titlespacing{\subsection}{0pt}{5pt}{2pt}
\titlespacing{\subsubsection}{0pt}{4pt}{2pt}

\usepackage[colorinlistoftodos,textwidth=1 in,textsize=footnotesize,disable]{todonotes}

\title{State-Constrained Control of Discrete-Time Nonlinear Systems \\ via Constraint Lifting}
\author{
Jhon Manuel Portella Delgado
and
Ankit Goel
\thanks{Jhon Manuel Portella Delgado and Ankit Goel are with the Department of Mechanical Engineering, University of Maryland, Baltimore County, 1000 Hilltop Circle, Baltimore, MD 21250. {\tt\small jportella, ankgoel@umbc.edu}}%
\thanks{Jhon Manuel Portella Delgado is now with the Department of Aerospace Engineering, University of Michigan, Ann Arbor, MI 48109, USA.}
}

\begin{document}

\maketitle

\listoftodos

\todo{remove to do list}


\begin{abstract}
This paper presents a constraint-enforcing control framework for a class of discrete-time strict-feedback nonlinear systems.
The objective is to guarantee closed-loop stability while ensuring forward invariance of a prescribed safe set defined by state constraints.
The proposed approach transforms the constrained control problem into an equivalent unconstrained one through smooth constraint-lifting mappings constructed using strictly increasing sigmoid functions. 
Controller synthesis is then performed in the lifted coordinates, enabling recursive backstepping design while preserving the admissibility of the constrained states.
Conditions on the controller gains are derived to guarantee both asymptotic stability of the closed-loop system and forward invariance of the admissible domain of the lifting functions.
The analysis also establishes a conditional deadbeat property for the second backstepping step once the system trajectory enters a region in which the lifting-domain admissibility conditions are satisfied. 
Numerical simulations of a constrained double-integrator system demonstrate the effectiveness of the proposed method in enforcing state constraints while tracking a reference command.

\end{abstract}
\textit{\bf keywords:} 
Constraint-lifting control, State-constrained control, Discrete-time nonlinear systems, Forward invariance, Backstepping control

\todo{fix g2}
\todo{read intro}
\section{Introduction}

The enforcement of state constraints in discrete-time nonlinear systems is closely tied to the problem of forward invariance of a prescribed safe set.
This problem arises naturally in digitally implemented control systems, where safety, actuator limits, and admissibility conditions must be satisfied at every sampling instant.
Set-invariance methods provide the underlying theoretical framework for this problem, while constrained-control architectures, such as reference governors, offer practical mechanisms to enforce constraints without redesigning the nominal stabilizing controller \cite{Blanchini1999, Gilbert1995, Garone2017}.

A widely used strategy in discrete-time constrained control is to augment a nominal controller with a supervisory law, such as a reference governor, that modifies the command whenever constraint violation is predicted \cite{Gilbert1995, Garone2017}.
These methods are effective and well-established, but they serve as add-on schemes rather than yielding an explicit stabilizing control law for the constrained plant itself. 
More recently, barrier-based methods have also been extended to discrete-time systems through discrete control barrier functions, which enforce forward invariance by imposing a safety inequality at each time step \cite{Agrawal2017}. 
However, these methods are typically implemented through an optimization problem, and in the discrete-time setting, the resulting program is not necessarily convex \cite{Agrawal2017}.
For nonlinear systems, this may increase computational complexity and complicate the stability analysis of the closed-loop system.

Another approach addresses constraints via nonlinear coordinate transformations that map a bounded state space to an unconstrained one, thereby enabling the use of classical nonlinear control tools on the transformed dynamics \cite{Guo2014}. 
Although this idea is attractive, conventional transformation-based designs and barrier Lyapunov formulations may suffer from numerical ill-conditioning near the constraint boundary, where the transformed coordinates or their derivatives become large \cite{Tee2009, Guo2014}. 
These difficulties become more pronounced when several constrained states are handled recursively.

Motivated by these limitations, this paper develops a discrete-time constraint-lifting framework for a class of nonlinear systems.
The central idea is to map constrained states into an unconstrained coordinate system via smooth constraint-lifting transformations constructed from strictly increasing sigmoid functions. 
Controller synthesis is then carried out in the lifted coordinates using a recursive design, while the inverse transformation guarantees that the original state constraints are satisfied.

The contributions of this paper are as follows. 
First, a constraint-lifting control framework is developed for discrete-time strict-feedback systems that transforms the constrained control problem into an equivalent unconstrained one. 
Second, conditions on the controller gains are derived to guarantee both asymptotic stability of the closed-loop system and forward invariance of the admissible domain of the lifting transformations. 
Third, the analysis establishes a conditional deadbeat property for the second backstepping step once the closed-loop trajectory enters a region in which the admissibility conditions are satisfied.

The proposed approach yields an explicit control law, avoids online optimization, and ensures constraint satisfaction and closed-loop stability. 
Numerical simulations illustrate the effectiveness of the proposed framework on a constrained double-integrator example.

\section{Problem Formulation}
Consider the system
\begin{align}
    x_{1_{k+1}}
        &=
            f_1(x_{1_k})
            +
            g_1(x_{1_k})
            x_{2_k}
    \label{eq:x1_dot}
    \\
    x_{2_{k+1}}
        &=
            f_2(x_{1_k},x_{2_k})
            +
            g_2(x_{1_k},x_{2_k})
            u_{k},
    \label{eq:x2_dot}
\end{align}
where $x_{1_k},$ $x_{2_k} \in \BBR,$ are the states, $u_k \in \BBR$ is the control, and the functions $f_1 : \BBR \to \BBR,$ $g_1 : \BBR \to \mathbb{R},$ $f_2 : \BBR \times \BBR \to \BBR,$ and $g_2 : \BBR \times \BBR \to \mathbb{R}.$
The goal is to design a feedback controller $u_k$ such that $x_{1_k}$ tracks a desired command $x_{1\rmd_k}$ while $|x_{1_k}| < \overline{x}_1,$ and $|x_{2_k}| < \overline{x}_2.$
In this work, we consider the scalar case to simplify the exposition and focus on the key ideas of the proposed constraint-lifting framework. 
The extension to multi-dimensional systems is conceptually similar and will be developed in future work.

\section{Constraint Lifting}
\label{sec:Constraint_Lifting}

The states to be constrained are \textit{lifted} using a constraint-lifting transformation to obtain an unconstrained representation in a lifted state space.
The sigmoid functions and their inverses to lift the constraints are described in detail in \cite{portella2025constrained}.

First, the states $(x_1,x_2)$ are mapped to intermediate variables $(\chi_1,\chi_2)$ such that the bounds on the components of $x_1$ and $x_2$ are normalized to $\pm1$. 
Consequently, whenever $(x_1,x_2)\in\SSS$, each component of $(\chi_1,\chi_2)$ lies in $(-1,1)$, that is, $(\chi_1,\chi_2)\in\SC$.
The normalized variables $(\chi_1,\chi_2)$ are then mapped to $(z_1,z_2)\in\BBR^n$ using a constraint-lifting function. 
Controller synthesis is performed in the lifted $z$-coordinates for the transformed state $x_{1\rmd}$.
Finally, a transformation from $(z_1,z_2)$ to $(\zeta_1,\zeta_2)$ is introduced to simplify both the inverse mapping from $z$ to $x$ and the subsequent stability analysis.

The sequence of transformations from $x$ to $z$, together with the intermediate variables $\chi$ and $\zeta$, is shown in Figure \ref{fig:state_transformation}.

\begin{figure}[!ht]
\centering
    {%
    \begin{tikzpicture}[>={stealth'}, line width = 0.25mm]

    \node [input, name=ref]{};
    
    \node [smallblock, fill=green!20, rounded corners, right = 0.5cm of ref , 
           minimum height = 0.6cm, minimum width = 0.7cm] 
           (controller) {$\dfrac{x}{\overline{x}}$};
    
    \node [smallblock, rounded corners, right = 2cm of controller, 
           minimum height = 0.6cm , minimum width = 0.7cm] 
           (integrator)            {$\overline x\,\phi(\chi)$};
    
    \node [smallblock, fill=green!20, rounded corners, below = 0.5cm of integrator, 
           minimum height = 0.6cm , minimum width = 0.7cm] 
           (system) {$\dfrac{z}{\overline{x}}$};
           
    \node [smallblock, rounded corners, below = 0.5cm of controller, 
           minimum height = 0.6cm , minimum width = 0.7cm] 
           (integrator2)            {$\overline x\,\psi(\zeta)$};

    \node [output, right = 1.0cm of system] (output) {};
    
    
    \node [input, left = 1.0cm of controller] (reference) {};
    
    \draw [->] (controller) -- node [above] {$\chi\in \SC$} (integrator);
    \draw [->] (integrator.0) -- +(1,0) node [above, xshift = -1em] {$z\in\BBR^n$} |-  (system.0);
    \draw [->] (system.180)  -- node [above] {$\zeta \in \BBR^n$} (integrator2.0);
    \draw [->] (integrator2.180) -- +(-1,0) |- node [above, xshift = 1em] {$x \in \SSS$} (controller.180);
    \end{tikzpicture}
    }  
    \caption{
        State transformations with the constraint-lifting function and its inverse used in the control synthesis.
    }
    \label{fig:state_transformation}
\end{figure}
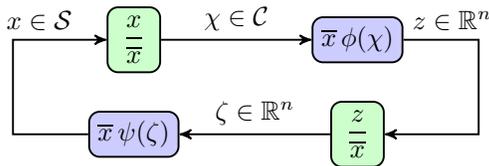

Define the normalized states
\begin{align}
    \chi_{1_k}
        &\isdef
            \dfrac{x_{1_k}}{\overline{x}_1},
    \qquad
    \chi_{2_k}
        \isdef
            \dfrac{x_{2_k}}{\overline{x}_2}.
\end{align}
By construction, the constraint boundaries are mapped to $\pm1$. 
Hence, if $(x_1,x_2)\in\SSS$, then $(\chi_1,\chi_2)\in\SC$.
Next, define the lifted coordinates
\begin{align}
    z_{1_k} 
        &\isdef
            \overline{x}_1\,\phi_1(\chi_{1_k}),
    \qquad
    z_{2_k} 
        \isdef
            \overline{x}_2\,\phi_2(\chi_{2_k}),
    \label{eq:x2_to_z2}
\end{align}
where $\phi_i:\SC\to\BBR$ is the constraint-lifting function.
To simplify controller synthesis, we define the auxiliary variables
\begin{align}
    \zeta_{1_k}
        &\isdef
            \dfrac{z_{1_k}}{\overline{x}_1},
    \qquad
    \zeta_{2_k}
        \isdef
            \dfrac{z_{2_k}}{\overline{x}_2}.
    \label{eq:zeta2_def}
\end{align}
The inverse transformation is then defined by
\begin{align}
    x_{1_k} 
        &=
            \overline{x}_1\,\psi_1(\zeta_{1_k}),
    \qquad
    x_{2_k} 
        =
            \overline{x}_2\,\psi_2(\zeta_{2_k}),
    \label{eq:z2_to_x2}
\end{align}
where $\psi_i:\BBR\to\SSS$.

Under these transformations, the dynamics \eqref{eq:x1_dot}--\eqref{eq:x2_dot} expressed in the lifted coordinates $z$ become
\begin{align}
    z_{1_{k+1}}
        &=
            \overline{x}_1
            \phi_1
            \bigl(
                \SF_1(z_{1_k},z_{2_k})
            \bigr),
    \label{eq:z1_dot}
    \\
    z_{2_{k+1}}
        &=
            \overline{x}_2
            \phi_2
            \bigl(
                \SF_2(z_{1_k},z_{2_k},u_k)
            \bigr),
    \label{eq:z2_dot}
\end{align}
where
{\small{
\begin{align}
    \SF_1(z_{1_k},z_{2_k})
        &\isdef
            \dfrac{
                f_1\bigl(\overline{x}_1\psi_1(\zeta_{1_k})\bigr)
            }
            {\overline{x}_1}
            \nn \\ &\quad 
            +
            \dfrac{\overline{x}_2}{\overline{x}_1}
            g_1\bigl(\overline{x}_1\psi_1(\zeta_{1_k})\bigr)
            \psi_2(\zeta_{2_k}),
    \\
    \SF_2(z_{1_k},z_{2_k},u_k)
        &\isdef
            \dfrac{
                f_2\bigl(
                    \overline{x}_1\psi_1(\zeta_{1_k}),
                    \overline{x}_2\psi_2(\zeta_{2_k})
                \bigr)
            }
            {\overline{x}_2}
            \nn \\ &\quad 
            +
            \dfrac{
                g_2\bigl(
                    \overline{x}_1\psi_1(\zeta_{1_k}),
                    \overline{x}_2\psi_2(\zeta_{2_k})
                \bigr)
            }
            {\overline{x}_2}
            u_k.
\end{align}
}}

\section{Controller Synthesis}
\label{sec:Controller_design}

Let $x_{1\rmd_k}\in\SSS$ denote the desired value of $x_1$. Define the corresponding normalized and lifted states
\begin{align}
    \chi_{1\rmd_k}
        &\isdef
            \frac{x_{1\rmd_k}}{\overline{x}_1},
    \ 
    z_{1\rmd_k}
        \isdef
            \overline{x}_1\,\phi_1(\chi_{1\rmd_k}),
    \ 
    \zeta_{1\rmd_k}
        \isdef
            \frac{z_{1\rmd_k}}{\overline{x}_1}.       
    \label{eq:z1d}
\end{align}
and define the tracking error
\begin{align}
    e_{1_k}
        &\isdef
            \psi_1
            \left(
                \zeta_{1_k}
            \right)
            -
            \psi_1
            \left(
                \zeta_{1\rmd_k}
            \right).
    \label{eq:e1_def}
\end{align}

\subsection{\texorpdfstring{$e_1$}{e1} stabilization}

Consider the Lyapunov function
    $V_{1_k}
        \isdef
            \frac{1}{2}e_{1_k}^2 .$
Define $\Delta V_{1_k} \isdef V_{1_k}(e_{1_{k+1}})-V_{1_k}(e_{1_k})$. Then
\begin{align}
    \Delta V_{1_k}
        &=
            \frac{1}{2}
            \left(
                \SF_1(z_{1_k},z_{2_k})
                -
                \psi_1
                \!\left(
                    \zeta_{1\rmd_{k+1}}  
                \right)
            \right)^2
            -
            \frac{1}{2}e_{1_k}^2 .
\end{align}

If
\begin{align}
    \SF_1(z_{1_k},z_{2_k})
        -
        \psi_1
        \!\left(
            \zeta_{1\rmd_{k+1}}  
        \right)
        =
        \rho_1 e_{1_k},
    \label{eq:first_BS_Step}
\end{align}
where $|\rho_1|<1$, then
    $\Delta V_{1_k}
        =
        (\rho_1^2-1)V_{1_k}.$
%
However, the state $z_{2_k}$ is not the control input, and thus \eqref{eq:first_BS_Step} cannot be arbitrarily enforced. 

Instead, we define
\begin{align}
    z_{2\rmd_k}
        &\isdef
            \SF_1^\rmI
            \left(
                z_{1_k},
                \rho_1 e_{1_k}
                +
                \psi_1
                \!\left(
                    \zeta_{1\rmd_{k+1}}  
                \right)
            \right),
    \label{eq:z2dk}
\end{align}
where $\SF_1^\rmI(z_1,\cdot)$ denotes the inverse mapping satisfying
    $z_2=\SF_1^\rmI(z_1,y)
    \Longleftrightarrow 
\SF_1(z_1,z_2)=y .$
Using \eqref{eq:z2dk}, define
\begin{align}
    e_{2_k}
        &\isdef
            \psi_2
            \!\left(
                \zeta_{2_k}
            \right)
            -
            \psi_2
            \!\left(
                \zeta_{2\rmd_k}
            \right),
\end{align}
where $\zeta_{2\rmd_k} \isdef {z_{2\rmd_k}}/{\overline{x}_2}.$
Next, we construct a control law that ensures that $e_{2_k} \to 0$ and thus $z_{2_k} - z_{2\rmd_k} \to 0.$

\subsection{\texorpdfstring{$e_2$}{e2} stabilization}

Consider
    $V_{2_k}
        \isdef
            \frac{1}{2}e_{2_k}^2 .$
Define $\Delta V_{2_k}\isdef V_{2_k}(e_{2_{k+1}})-V_{2_k}(e_{2_k})$. Then
\begin{align}
    \Delta V_{2_k}
        &=
            \frac{1}{2}
            \left(
                \SF_2(z_{1_k},z_{2_k},u_k)
                -
                \psi_2
                \!\left(
                    \zeta_{2\rmd_{k+1}}   
                \right)
            \right)^2
            -
            \frac{1}{2}e_{2_k}^2 .
\end{align}

If
\begin{align}
    \SF_2(z_{1_k},z_{2_k},u_k)
        -
        \psi_2
        \!\left(
            \zeta_{2\rmd_{k+1}}  
        \right)
        =
        \rho_{2_k} e_{2_k},
    \label{eq:second_BS_Step}
\end{align}
where $|\rho_{2_k}|<1$, then
    $\Delta V_{2_k}
        =
        (\rho_{2_k}^2-1)V_{2_k}.$
Thus, we consider the control law
\begin{align}
    u_k
        =
            \SF_2^\rmI
            \Bigg(
                z_{1_k},
                z_{2_k},
                \rho_{2_k}e_{2_k}
                +
                \psi_2
                \!\left(
                    \zeta_{2\rmd_{k+1}}   
                \right)
            \Bigg),
\label{eq:u_k_constraint_lifting}
\end{align}
where $\SF_2^\rmI(z_1,z_2,\cdot)$ denotes the inverse mapping satisfying
    $u=\SF_2^\rmI(z_1,z_2,y)
    \Longleftrightarrow 
    \SF_2(z_1,z_2,u)=y .$

\begin{remark}
Note that 
\begin{align}
    e_{2_{k+1}}
        &=
            \rho_{2_k} e_{2_k}.
    \label{eq:e2_dynamics_theorem}
\end{align}
Consequently, if for some $k_\rms>0,$ $\rho_{2_k}=0,$ then, for all $k\ge k_\rms,$ $e_{2_k} =0.$
\end{remark}

\subsection{Stability Analysis}
Define the composite Lyapunov difference
\begin{align}
    \Delta V_k
        &\isdef
            \Delta V_{1_k}
            +
            \Delta V_{2_k}
        \\
        &=
            (\rho_1^2-1)V_{1_k}
            +
            (\rho_{2_k}^2-1)V_{2_k}.
\end{align}
If $|\rho_1|<1$ and $|\rho_{2_k}|<1$, then, the origin of the closed-loop system \eqref{eq:z1_dot}--\eqref{eq:z2_dot} under the controller \eqref{eq:u_k_constraint_lifting} is asymptotically stable \cite{khalil2002nonlinear}.

\section{Admissibility and Forward Invariance of the Constraint-Lifting Transformations}
\label{sec:Admisibility_constraint_leifting}
Admissibility of the constraint-lifting maps $\phi_1$ and $\phi_2$ requires that their arguments remain within the domain $(-1,1)$. 
From \eqref{eq:z1_dot}-\eqref{eq:z2_dot}, this condition is satisfied if, for all $k \ge 0,$
\begin{align}
    |\SF_1(z_{1_k},z_{2_k})| &< 1,
    \quad
    |\SF_2(z_{1_k},z_{2_k},u_k)| < 1.
\end{align}
These conditions ensure that the arguments of $\phi_1$ and $\phi_2$ remain in $(-1,1)$, so that the transformations \eqref{eq:z1_dot}--\eqref{eq:z2_dot} are well defined along the closed-loop trajectories.
Define the admissible sets
\begin{align}
    \SA_1
        &\isdef
            \left\{
                (z_1,z_2)\in\BBR^2
                :
                |\SF_1(z_1,z_2)| < 1
            \right\}, 
    \\
    \SA_2
        &\isdef
            \left\{
                (z_1,z_2,u)\in\BBR^3
                :
                |\SF_2(z_1,z_2,u)| < 1
            \right\}.
\end{align}

\subsection{Forward Invariance of the Admissible Domain of $\phi_1$}
The following result establishes forward invariance of the admissible domain of the lifting function $\phi_1$. 
To simplify the presentation, the theorem is stated for $\rho_1=0$. 
The general case $|\rho_1|<1$ introduces additional conditions on the initial state and leads to a considerably longer proof.

\begin{theorem}[Forward Invariance of the Admissible Domain of $\phi_1$]
\label{thm:forward_invariance_phi_1}
Consider the transformed dynamics \eqref{eq:z1_dot} and the first contraction condition \eqref{eq:first_BS_Step}.
Assume that the initial state $x_{0} \in \SSS$ and the corresponding lifted state $(z_{1_0}, z_{2_0}) \in \SA_1$.
Assume that for all $k \geq 0,$  $|x_{1\rmd_k}| < \overline{x}_1.$
Let $\rho_1 = 0.$
Then, the admissible domain of the lifting function $\phi_1$ is forward invariant along the closed-loop trajectories, that is, for all $k \geq 0,$
\begin{align}
    \left|
        \SF_1(z_{1_k},z_{2_k})
    \right|
    <
    1.
    \label{eq:phi1_domain_requirement}
\end{align}
\end{theorem}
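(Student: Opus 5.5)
The plan is an induction on $k$ driven by the first contraction condition \eqref{eq:first_BS_Step}. With $\rho_1=0$, that condition says the virtual control $z_{2\rmd_k}$ in \eqref{eq:z2dk} is chosen so that, whenever $z_{2_k}=z_{2\rmd_k}$, we have $\SF_1(z_{1_k},z_{2_k})=\psi_1(\zeta_{1\rmd_{k+1}})$. The key observation is that $\psi_1:\BBR\to(-1,1)$, so this quantity always has modulus strictly less than one. Moreover, since $|x_{1\rmd_{k+1}}|<\overline{x}_1$, we get $\chi_{1\rmd_{k+1}}\in(-1,1)$, the lifted value $\zeta_{1\rmd_{k+1}}=\phi_1(\chi_{1\rmd_{k+1}})$ is well defined, and $\psi_1(\zeta_{1\rmd_{k+1}})=\chi_{1\rmd_{k+1}}$. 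Hence $|\SF_1|=|x_{1\rmd_{k+1}}|/\overline{x}_1<1$.

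First, the base case $k=0$ is exactly the hypothesis $(z_{1_0},z_{2_0})\in\SA_1$. For the inductive step, assume $|\SF_1(z_{1_j},z_{2_j})|<1$ for all $j\le k$, so that $z_{1_{j+1}}$ is well defined through \eqref{eq:z1_dot}. Under the contraction condition \eqref{eq:first_BS_Step} with $\rho_1=0$, we have $\SF_1(z_{1_{k+1}},z_{2_{k+1}})=\psi_1(\zeta_{1\rmd_{k+2}})$. By the observation above, this equals $\chi_{1\rmd_{k+2}}$, which lies in $(-1,1)$ by the standing assumption on $x_{1\rmd}$. Evaluating at the step $k+1$ rather than $k$ is legitimate because the contraction condition is imposed at every time along the closed loop. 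The induction closes and gives \eqref{eq:phi1_domain_requirement}.

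The main obstacle is justifying that \eqref{eq:first_BS_Step} actually holds at each step, that is, that $z_{2_k}$ coincides with $z_{2\rmd_k}$ rather than merely converging to it. The theorem hypothesizes the contraction condition, so I would read it as an assumption that holds along trajectories. Equivalently, I would restrict attention to the regime $e_{2_k}=0$, which the Remark guarantees after $k_\rms$ when $\rho_{2_k}=0$. I would also check two further points. The first is that the inverse $\SF_1^\rmI(z_{1_k},\cdot)$ is defined at the target value $\psi_1(\zeta_{1\rmd_{k+1}})\in(-1,1)$, which requires $g_1\neq0$ so that $\SF_1$ is affine and invertible in $\psi_2(\zeta_2)$. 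The second is that the resulting $z_{2\rmd_k}$ is finite, meaning the needed $\psi_2(\zeta_2)$ lies in $(-1,1)$. If the latter fails, I would state it as an additional implicit requirement on $f_1,g_1$ and the bounds; it is not needed for the $\SF_1$ bound itself.
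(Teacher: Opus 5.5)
Your proposal is correct and follows essentially the paper's argument: imposing \eqref{eq:first_BS_Step} with $\rho_1=0$ gives $\SF_1(z_{1_k},z_{2_k})=\psi_1(\zeta_{1\rmd_{k+1}})=\chi_{1\rmd_{k+1}}$, and this has modulus below one because of the assumption on $x_{1\rmd}$. Your induction only makes explicit the well-definedness of $z_{1_{k+1}}$, which the paper leaves implicit; like the paper, you treat the contraction condition as holding at every step, which, as you correctly note, amounts to assuming $e_{2_k}=0$ along the trajectory.
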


\begin{proof}
    See Appendix \ref{proof:forward_invariance_phi_1}.
\end{proof}

Theorem \ref{thm:forward_invariance_phi_1} implies that the argument of $\phi_1$ remains in its admissible domain for all $k \ge 0$. 
Consequently, the admissible domain of $\phi_1$ is forward invariant along the closed-loop trajectories.

\subsection{Forward Invariance of the Admissible Domain of $\phi_2$}

In contrast to the first backstepping step, enforcing the deadbeat choice $\rho_{2_k}=0$ from the initial time may be overly restrictive, since the resulting control input must satisfy not only the admissibility condition of the lifting function $\phi_2$, but also the admissibility constraints previously imposed for $\phi_1$. 
To alleviate this restriction, a transient state-dependent gain $\rho_{2_k}$ with $|\rho_{2_k}|<1$ is employed so that the admissibility condition of $\phi_2$ is preserved while the trajectory evolves toward a region where the deadbeat choice becomes admissible. 
The following result establishes forward invariance of the admissible domain of $\phi_2$ during this transient phase.

\begin{theorem}[Forward Invariance of the Admissible Domain of $\phi_2$]
\label{thm:forward_invariance_phi_2}

Consider the transformed dynamics \eqref{eq:z2_dot} with the control law \eqref{eq:u_k_constraint_lifting}. 
Assume that Theorem \ref{thm:forward_invariance_phi_1} is satisfied. 
Suppose that, for all $0 \le k < k_\rms$, the gain $\rho_{2_k}$ satisfies $|\rho_{2_k}| < 1$ and 
\begin{align}
    \left|
        \rho_{2_k} e_{2_k}
        +
        \psi_2\!\left(
            \zeta_{2\rmd_{k+1}}   
        \right)
    \right|
    &< 1.
    \label{eq:phi2_admissibility_transient_theorem}
\end{align}
Then, for all $0 \le k < k_\rms$, where $k_\rms$ is the finite switching time, the argument of $\phi_2$ remains admissible. 
Hence, the admissible domain of $\phi_2$ is forward invariant for all $0 \le k < k_\rms.$
\end{theorem}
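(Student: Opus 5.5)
The argument is short and essentially algebraic: the control law \eqref{eq:u_k_constraint_lifting} is built so that $\SF_2$ takes a prescribed value, and hypothesis \eqref{eq:phi2_admissibility_transient_theorem} bounds exactly that value. I would proceed by induction on $k$ over $0 \le k < k_\rms$. The inductive claim is that, at time $k$, the lifted state $(z_{1_k}, z_{2_k})$ is well defined (finite), so $\zeta_{1_k}, \zeta_{2_k}$, $e_{2_k}$ and $\zeta_{2\rmd_{k+1}}$ are all defined. For the base case, $x_0 \in \SSS$ gives $(\chi_{1_0},\chi_{2_0}) \in \SC$, so $z_{1_0}$ and $z_{2_0}$ are finite. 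Since Theorem \ref{thm:forward_invariance_phi_1} is assumed to hold, $|\SF_1(z_{1_k},z_{2_k})| < 1$ for every $k$. Hence $z_{1_{k+1}}$ in \eqref{eq:z1_dot} is well defined at every step, and so is the quantity $z_{2\rmd_{k+1}}$ entering $\zeta_{2\rmd_{k+1}}$ through \eqref{eq:z2dk}.

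For the inductive step, fix $k < k_\rms$. The inverse map $\SF_2^\rmI(z_{1_k},z_{2_k},\cdot)$ is characterized by
\begin{align*}
    \SF_2\bigl(z_{1_k},z_{2_k},\SF_2^\rmI(z_{1_k},z_{2_k},y)\bigr) = y .
\end{align*}
Substituting \eqref{eq:u_k_constraint_lifting} with $y = \rho_{2_k}e_{2_k} + \psi_2(\zeta_{2\rmd_{k+1}})$ therefore gives
\begin{align*}
    \SF_2(z_{1_k},z_{2_k},u_k) = \rho_{2_k}e_{2_k} + \psi_2\bigl(\zeta_{2\rmd_{k+1}}\bigr).
\end{align*}
By \eqref{eq:phi2_admissibility_transient_theorem}, this quantity has magnitude strictly less than $1$. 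So $(z_{1_k},z_{2_k},u_k) \in \SA_2$, the argument of $\phi_2$ in \eqref{eq:z2_dot} lies in $(-1,1)$, and $z_{2_{k+1}}$ is finite. Together with the $\phi_1$ conclusion, the pair $(z_{1_{k+1}}, z_{2_{k+1}})$ is well defined, which closes the induction.

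The only step that needs care is solvability of the control law, that is, that $\SF_2^\rmI$ exists at the required value. Because $\SF_2$ is affine in $u$, this holds whenever
\begin{align*}
    g_2\bigl(\overline{x}_1\psi_1(\zeta_{1_k}),\overline{x}_2\psi_2(\zeta_{2_k})\bigr) \neq 0 .
\end{align*}
In that case one has explicitly
\begin{align*}
    u_k = \overline{x}_2\,\frac{y - f_2/\overline{x}_2}{g_2}.
\end{align*}
I would state this nonvanishing of $g_2$ as implicit in the definition of $\SF_2^\rmI$ and mention it as a standing assumption. I would also note that the bound $|\rho_{2_k}|<1$ is not needed for admissibility itself; it is used only so that, via \eqref{eq:e2_dynamics_theorem}, $e_2$ is contracting during the transient phase.
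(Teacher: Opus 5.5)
Your argument is correct and matches the paper's proof: substituting the control law gives $\SF_2(z_{1_k},z_{2_k},u_k)=\rho_{2_k}e_{2_k}+\psi_2(\zeta_{2\rmd_{k+1}})$, and the hypothesis bounds this strictly by $1$. Your induction on well-definedness and the $g_2\neq 0$ solvability caveat make explicit what the paper leaves implicit (the same caveat, plus a range condition, applies to $\SF_1^\rmI$ in defining $\zeta_{2\rmd_{k+1}}$, which the hypothesis tacitly presupposes), and you are right that $|\rho_{2_k}|<1$ plays no role in admissibility itself.
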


\begin{proof}
    See Appendix \ref{proof:forward_invariance_phi_2}.
\end{proof}

Theorem \ref{thm:forward_invariance_phi_2} implies that the argument of $\phi_2$ remains in its admissible domain throughout the transient phase $0 \le k < k_\rms$. 
Consequently, the admissible domain of $\phi_2$ is forward invariant along the closed-loop trajectories during this phase.

\subsection{Conditional Deadbeat Property of the Second Backstepping Step}
The transient phase described above ensures that the argument of the lifting function $\phi_2$ remains admissible while the trajectory evolves toward a region where the deadbeat choice becomes feasible. 
Once such a region is reached, the controller can be switched to the deadbeat choice $\rho_{2_k}=0$. 
The following result establishes that, provided the admissibility condition remains satisfied at the switching time, the second backstepping step operates in deadbeat mode thereafter while preserving admissibility of $\phi_2$.

\begin{theorem}[Conditional Deadbeat Property of the Second Backstepping Step]
\label{thm:conditional_deadbeat_phi2}

Consider the transformed dynamics \eqref{eq:z2_dot} with the control law \eqref{eq:u_k_constraint_lifting}. 
Assume that Theorem \ref{thm:forward_invariance_phi_2} is satisfied.
If, at $k=k_\rms$, the deadbeat choice $\rho_{2_k}=0$ satisfies
\begin{align}
    \left|
        \psi_2\!\left(
            \zeta_{2\rmd_{k_\rms+1}}   
        \right)
    \right|
    < 1,
    \label{eq:psi2condition}
\end{align}
and, for all $k \ge k_\rms,$ $\rho_{2_k}=0,$
then, for all $k \ge k_\rms,$
    $e_{2_{k+1}} = 0,$
and the argument of $\phi_2$ remains admissible for all $k \ge k_\rms$. 
\end{theorem}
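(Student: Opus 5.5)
The proof will combine the closed-form error recursion of the Remark with a direct evaluation of the argument of $\phi_2$ under the control law \eqref{eq:u_k_constraint_lifting}. The key observation is that, by construction of $\SF_2^\rmI$, the control law enforces
\begin{align}
    \SF_2(z_{1_k},z_{2_k},u_k)
        =
        \rho_{2_k}e_{2_k}
        +
        \psi_2\!\left(\zeta_{2\rmd_{k+1}}\right)
\end{align}
exactly, for every $k$. So both claims reduce to statements about the right-hand side once $\rho_{2_k}=0$ is imposed for $k\ge k_\rms$.

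First I will establish the deadbeat part. From \eqref{eq:e2_dynamics_theorem}, $e_{2_{k+1}}=\rho_{2_k}e_{2_k}$. Since $\rho_{2_k}=0$ for all $k\ge k_\rms$, this gives $e_{2_{k+1}}=0$ for all $k\ge k_\rms$ directly. I must check that $e_{2_{k+1}}$ is well defined, which requires $z_{2_{k+1}}$ to exist. By \eqref{eq:z2_dot}, this holds precisely when the argument of $\phi_2$ is admissible at step $k$. The two claims therefore have to be proved together by induction on $k\ge k_\rms$.

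The induction runs as follows. At $k=k_\rms$, Theorem \ref{thm:forward_invariance_phi_2} guarantees that all quantities up to step $k_\rms$ are well defined. With $\rho_{2_{k_\rms}}=0$, the argument of $\phi_2$ equals $\psi_2(\zeta_{2\rmd_{k_\rms+1}})$, which lies in $(-1,1)$ by \eqref{eq:psi2condition}. Hence $z_{2_{k_\rms+1}}$ is well defined and $e_{2_{k_\rms+1}}=0$. For a general step $k>k_\rms$, the argument of $\phi_2$ is again $\psi_2(\zeta_{2\rmd_{k+1}})$. Here I will use that $\psi_2:\BBR\to(-1,1)$ by construction: it is the inverse sigmoid map, whose range is the normalized constraint set $\SC$. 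This yields $|\psi_2(\zeta_{2\rmd_{k+1}})|<1$ automatically, provided $\zeta_{2\rmd_{k+1}}$ is a finite real number.

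The main obstacle is exactly this proviso. The quantity $\zeta_{2\rmd_{k+1}}=z_{2\rmd_{k+1}}/\overline{x}_2$ is defined through $\SF_1^\rmI$ in \eqref{eq:z2dk}. It is finite only if the desired value $\rho_1 e_{1_{k+1}}+\psi_1(\zeta_{1\rmd_{k+2}})$ lies in the range of $\SF_1(z_{1_{k+1}},\cdot)$ and $z_{1_{k+1}}$ is itself well defined. I will obtain the latter from the hypothesis that Theorem \ref{thm:forward_invariance_phi_1} holds, which gives $|\SF_1|<1$ for all $k$ and hence a finite $z_{1_{k+1}}$. For the former, I will rely on the same solvability of $\SF_1^\rmI$ already assumed in the construction \eqref{eq:z2dk}. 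I will also use $|\psi_1(\zeta_{1\rmd})|<1$, which follows from $|x_{1\rmd_k}|<\overline{x}_1$. If this is accepted, so that the standing assumption is that the inverse maps are well defined, then \eqref{eq:psi2condition} is needed only at the switching instant and the induction closes trivially for all later steps. I will state this reliance explicitly in the proof.
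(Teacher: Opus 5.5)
Your proposal is correct and follows the paper's proof. With $\rho_{2_k}=0$ the control law gives $\SF_2(z_{1_k},z_{2_k},u_k)=\psi_2(\zeta_{2\rmd_{k+1}})$, whose bound gives admissibility of the argument of $\phi_2$, and the Remark's recursion $e_{2_{k+1}}=\rho_{2_k}e_{2_k}$ gives $e_{2_{k+1}}=0$.

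Your induction and range argument go further than the paper, which invokes \eqref{eq:psi2condition} for all $k\ge k_\rms$ even though it is assumed only at $k_\rms$. Because $\psi_2$ maps into $(-1,1)$, you get the bound at every $k>k_\rms$ once $\zeta_{2\rmd_{k+1}}$ exists. The same argument makes that condition automatic at $k_\rms$ as well, so the real content is the solvability of $\SF_1^\rmI$ that you flag.
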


\begin{proof}
    See Appendix \ref{proof:conditional_deadbeat_phi2}.
\end{proof}

Theorem \ref{thm:conditional_deadbeat_phi2} implies that, once the switching condition \eqref{eq:psi2condition} is satisfied, the second-step error is driven to zero in one step and remains identically zero thereafter. 
Consequently, the admissibility of the argument of $\phi_2$ is preserved for all $k \ge k_\rms$.

\section{Numerical Example}

This section illustrates the controller developed in Section \ref{sec:Controller_design} and verifies the admissibility conditions for $\phi_1$ and $\phi_2$ established in Section \ref{sec:Admisibility_constraint_leifting}. 
Consider the discrete-time double integrator
\begin{align}
    x_{1_{k+1}} &= x_{1_k} + x_{2_k},
    \label{eq:x1_double_integrator}
    \\
    x_{2_{k+1}} &= x_{2_k} + u_k.
    \label{eq:x2_double_integrator}
\end{align}
The objective is to follow the command $x_{1\rmd_k}$ while simultaneously satisfying position and velocity constraints, $|x_{1_k}| < \overline{x}_1,$ and $|x_{2_k}| < \overline{x}_2.$

In this example, the inverse sigmoid lifting function $\phi(\chi) = \mathrm{atanh}(\chi)$ is used for both states. 
Furthermore, we set $\rho_1 = 0$. 
Then,
\begin{align}
    \SF_1(z_{1_k},z_{2_k})
        &=
            \dfrac{1}{\overline{x}_1}
            \left(
                x_{1_k}
                +
                x_{2_k}
            \right)
        \nn \\
        &=
        \tanh
                \left(
                \zeta_{1_k}
                \right)
                +
                    \dfrac{\overline{x}_2}{\overline{x}_1}
                \tanh
                (\zeta_{2_k})
    \\
    \SF_2(z_{1_k},z_{2_k},u_{k})
        &=
        \dfrac{1}{\overline{x}_2}
        \left(
            \rho_{2_k}
            -
            1
        \right)
        \left(
            x_{2_k}
            +
            x_{1_k}
            -
            x_{1\rmd_{k}}
        \right)
        \nn \\
        &=
        \tanh (\zeta_{2_k})
                +
                \dfrac
                {
                    u_k
                }
                {
                    \overline{x}_2
                }
    \label{eq:viable_set_z2_dot_closed_loop}
\end{align}

The initial conditions are selected inside the safe set $\SSS$.
Furthermore, the admissibility conditions associated with the sets $\SA_1$ and $\SA_2$ impose the additional constraints on the initial conditions, given by
\begin{align}
    |x_{1_0} + x_{2_0}| 
        &< 
            \overline{x}_1,
    \quad
    |x_{1_0} + x_{2_0} - x_{1\rmd_k}|
        < 
            \overline{x}_2,
    \label{eq:cond_DI_Pos_Vel}
\end{align}
To simplify the controller implementation, we set $\rho_{2_0}=0$ in \eqref{eq:viable_set_z2_dot_closed_loop}, which yields the condition \eqref{eq:cond_DI_Pos_Vel}. 
Next, we choose
\begin{align}
    \rho_{2_k}
        =
            \left\{
            \begin{array}{cc}
                 0, &  |\Delta x_k| < \overline{x}_2,
                 \\[4pt]
                 1 - \dfrac{\overline{x}_2}{2|\Delta x_k|}, & |\Delta x_k| \geq \overline{x}_2,
            \end{array}
            \right.
    \label{eq:rho_2_cond_double_integrator}
\end{align}
where
    $\Delta x_k \isdef x_{2_k} + x_{1_k} - x_{1\rmd_k}.$
Note that this switching law is not unique, it is one admissible choice consistent with the analysis of Section \ref{sec:Admisibility_constraint_leifting}.
Finally, the controller $u_k$ is given by \eqref{eq:u_k_constraint_lifting}.


Figure \ref{fig:Pos_Vel_Const_Rand_a_great_n_K2_R1} shows the closed-loop trajectories and phase portraits for three randomly generated initial conditions and corresponding randomly generated commands. 
The admissible sets $\SA_1$ and $\SA_2$ are shown in shaded blue and shaded green, respectively. 
The initial state is shown with a red square and the converged state is shown in green triangle. 
Note that $\SA_1$ does not change, whereas $\SA_2$ changes as the gain $\rho_{2_k}$ varies according to the switching law in \eqref{eq:rho_2_cond_double_integrator}. 
Despite this evolution, the closed-loop trajectories remain entirely within the admissible sets $\SA_1$ and $\SA)2$ and the safe set $\SSS$ and converge to the commanded value.


\begin{figure}[!ht]
    \centering
    \includegraphics[width=\columnwidth]{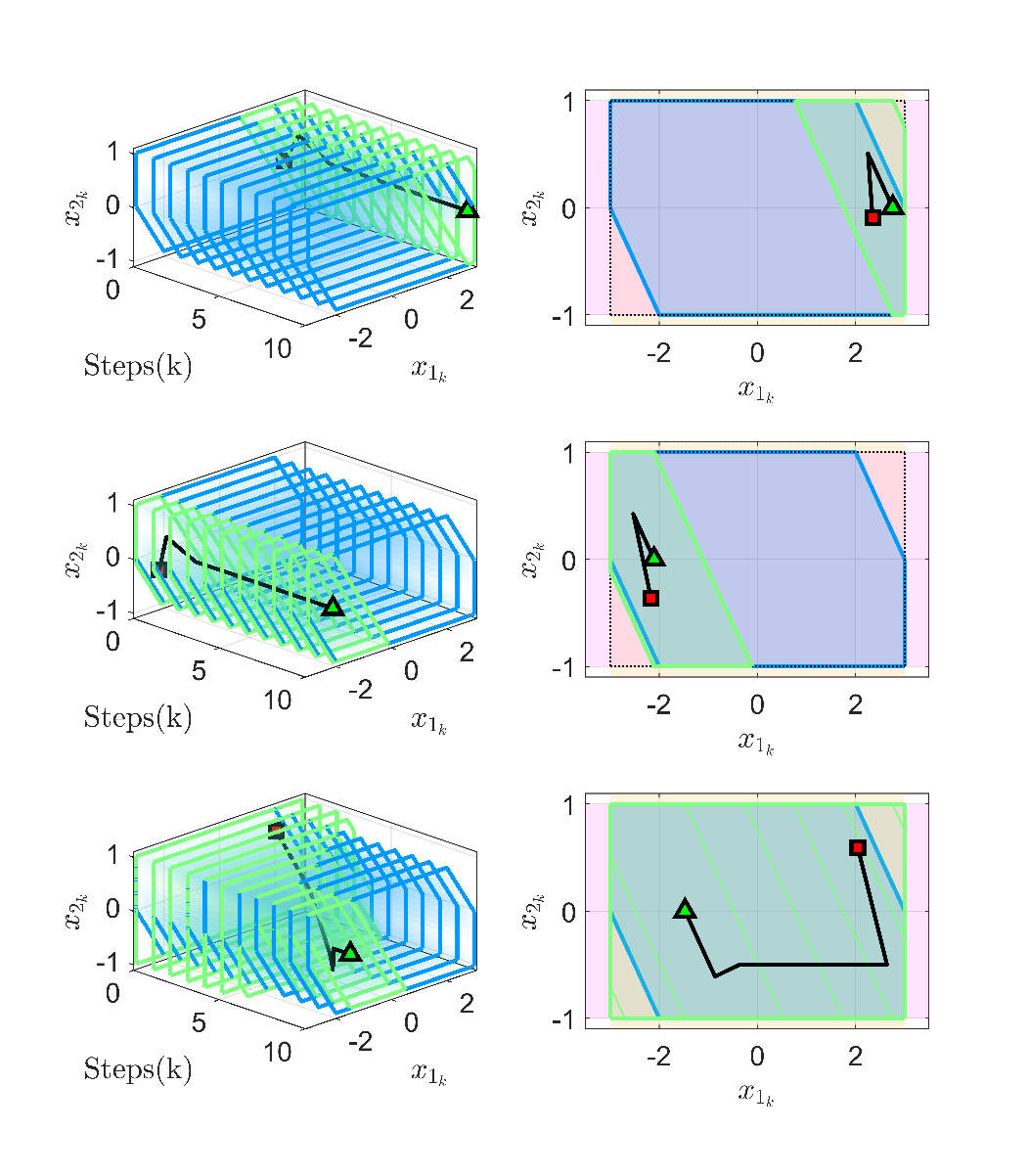}
    \caption{Closed-loop trajectory and phase portrait for the position- and velocity-constrained double integrator with randomly generated constant commands. 
    The admissible sets $\SA_1$ and $\SA_2$ are shown in shaded blue and shaded green, respectively.
    At each step, the boundaries of $\SA_1$ and $\SA_2$ are shown in solid blue and solid green, respectively. 
    Note that $\SA_1$ remains unchanged, whereas $\SA_2$ changes as the gain $\rho_{2_k}$ varies.
    }
    \label{fig:Pos_Vel_Const_Rand_a_great_n_K2_R1}
\end{figure}

Figure \ref{fig:Pos_Vel_Const_Rand_a_great_n_K2_R1_states} shows the corresponding time evolution of the states, control input, and gain $\rho_{2_k}.$ 
The safe set $\SSS$ for the states $x_1$ and $x_2$ is shown in shaded yellow and pink, respectively.
The gain $\rho_{2_k}$ varies only during the transient phase to maintain admissibility of the lifting dynamics and eventually is set to $0$ once the state $x_1$ converges to the commanded value. 
\begin{figure}[!ht]
    \centering
    \includegraphics[width=\columnwidth]{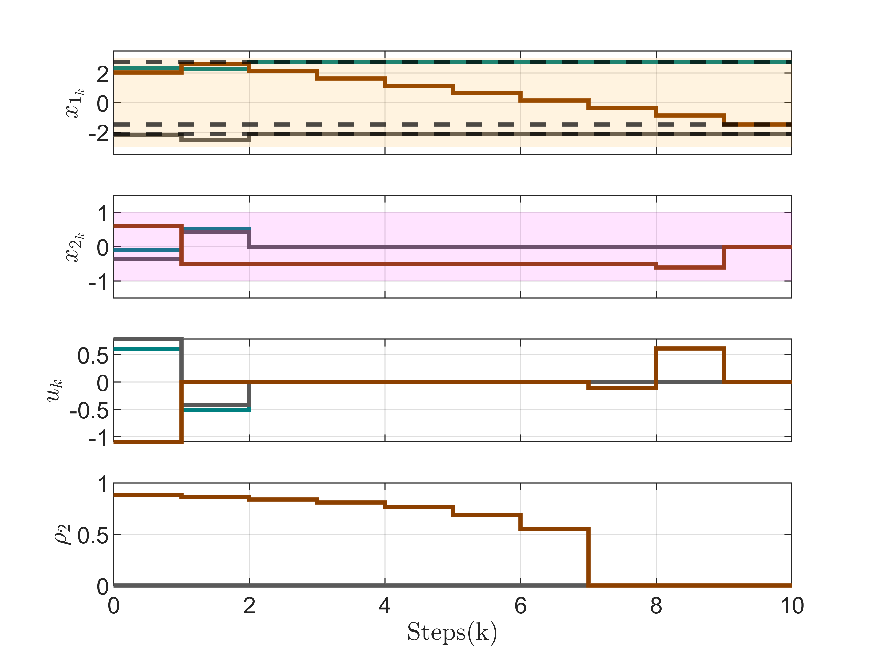}
    \caption{State, input, and gain evolution for the position- and velocity-constrained double integrator with randomly generated constant commands. 
    The safe set $\SSS$ for the states $x_1$ and $x_2$ is shown in shaded yellow and pink, respectively.
    The gain $\rho_{2_k}$ varies during the transient phase to preserve admissibility of the lifting dynamics.
    }
    \label{fig:Pos_Vel_Const_Rand_a_great_n_K2_R1_states}
\end{figure}

Figure \ref{fig:Pos_Vel_Const_Rand_K2_a_great_b_track_R1} shows the closed-loop trajectories and phase portraits for three randomly generated initial conditions under a harmonic command $x_{1\rmd_k} = 0.5 \sin (0.5 k).$ 
%
The admissible sets $\SA_1$ and $\SA_2$ are shown in shaded blue and shaded green, respectively.
Note that $\SA_1$ does not change, whereas $\SA_2$ changes as the gain $\rho_{2_k}$ varies according to the switching law in \eqref{eq:rho_2_cond_double_integrator}. 


\begin{figure}[!ht]
    \centering
    \includegraphics[width=\columnwidth]{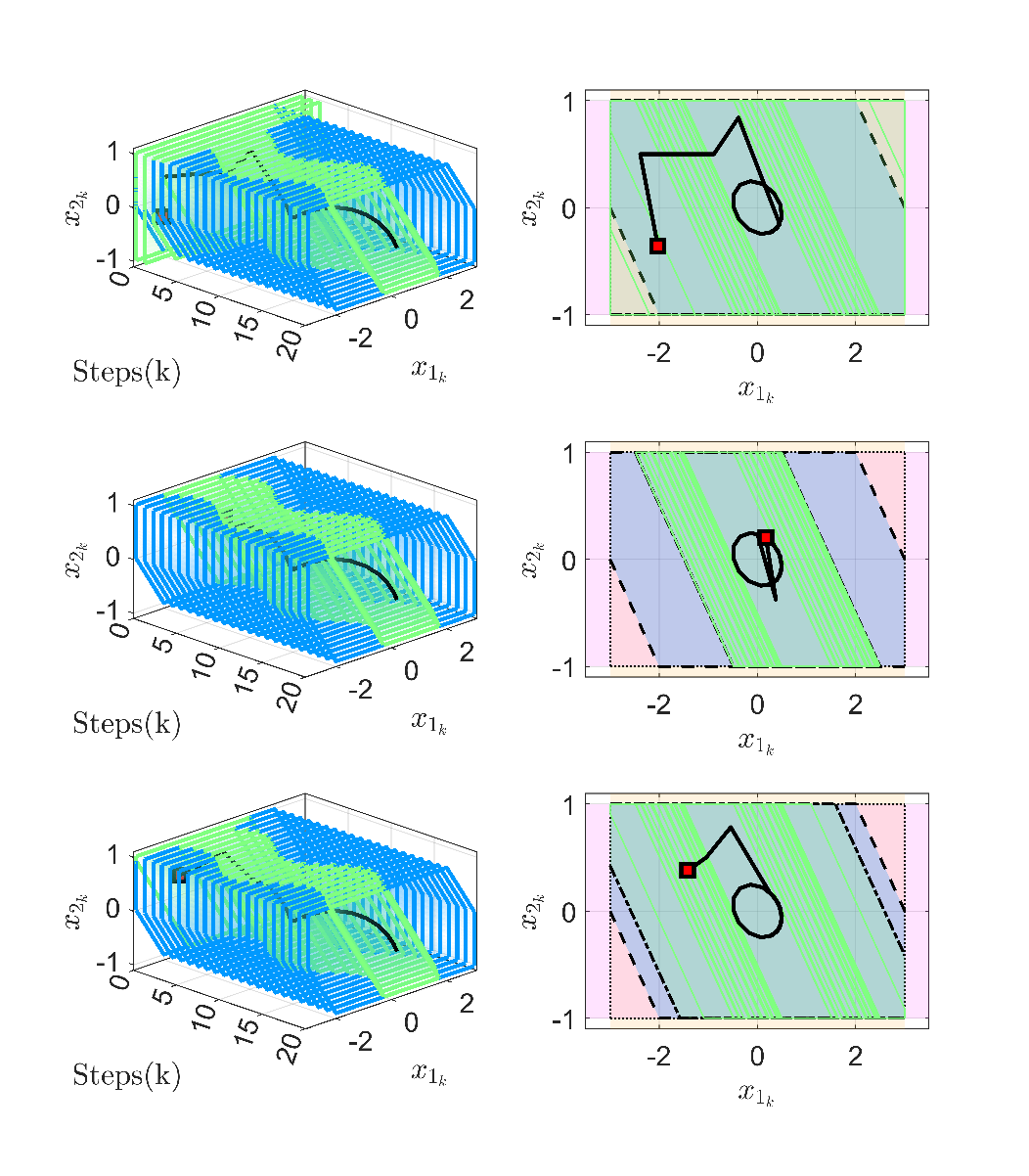}
    \caption{Closed-loop phase portrait for the position- and velocity-constrained double integrator with a sinusoidal command. 
    The admissible sets $\SA_1$ and $\SA_2$ are shown in shaded blue and shaded green, respectively.
    At each step, the boundaries of $\SA_1$ and $\SA_2$ are shown in solid blue and solid green, respectively. 
    Note that $\SA_1$ remains unchanged, whereas $\SA_2$ changes as the gain $\rho_{2_k}$ varies.
    }
    \label{fig:Pos_Vel_Const_Rand_K2_a_great_b_track_R1}
\end{figure}

Figure \ref{fig:Pos_Vel_Const_Rand_K2_a_great_b_track_R1_states} shows the corresponding time evolution of the states, control input, and gain $\rho_{2_k}.$ 
The safe set $\SSS$ for the states $x_1$ and $x_2$ is shown in shaded yellow and pink, respectively.
The gain $\rho_{2_k}$ varies only during the transient phase to maintain admissibility of the lifting dynamics and eventually is set to $0$ once the state $x_1$ converges to the commanded value. 

\begin{figure}[!ht]
    \centering
    \includegraphics[width=\columnwidth]{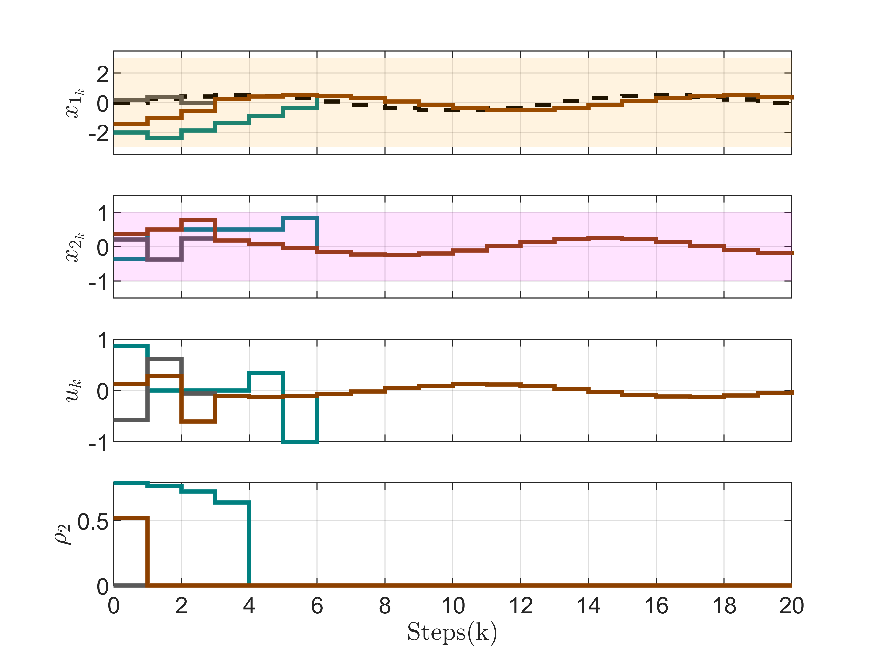}
    \caption{State, input, and gain evolution for the position- and velocity-constrained double integrator with a sinusoidal command. 
    The safe set $\SSS$ for the states $x_1$ and $x_2$ is shown in shaded yellow and pink, respectively.
    The gain $\rho_{2_k}$ varies during the transient phase to preserve admissibility of the lifting dynamics.}
    \label{fig:Pos_Vel_Const_Rand_K2_a_great_b_track_R1_states}
\end{figure}

\section{Conclusions}

This paper developed a constraint-lifting control framework for discrete-time strict-feedback nonlinear systems subject to state constraints. 
The proposed approach transforms the constrained control problem into an equivalent unconstrained one using smooth constraint-lifting mappings constructed from strictly increasing sigmoid functions. 
Controller synthesis is performed in the lifted coordinates using a recursive backstepping design, while the inverse transformation guarantees that the original state constraints are satisfied.

Sufficient conditions on the controller gains were derived to ensure asymptotic stability of the closed-loop system and forward invariance of the admissible domain of the lifting functions. 
The analysis further established a conditional deadbeat property for the second backstepping step once the closed-loop trajectory enters a region where the admissibility conditions are satisfied. 
Numerical simulations on a constrained double-integrator system demonstrated that the proposed method preserves state constraints while achieving stable regulation and tracking.

Future work will investigate extensions of the proposed framework to vector-valued constraints and adaptive control settings.

\label{sec:conclusions}

\appendix
\section{Proofs}
\subsection{Proof of Theorem \ref{thm:forward_invariance_phi_1}}
\label{proof:forward_invariance_phi_1}
\begin{proof}

From \eqref{eq:z1_dot}, the lifted dynamics is
    $z_{1_{k+1}}
        =
            \overline{x}_1
            \phi_1
            \bigl(
                \SF_1(z_{1_k},z_{2_k})
            \bigr).$
Hence, $z_{1_{k+1}}$ is well defined if \eqref{eq:phi1_domain_requirement} holds.

From the tracking error definition \eqref{eq:e1_def}, evaluated at $k+1$,
\begin{align}
    e_{1_{k+1}}
        &=
            \psi_1
            \left(
                \zeta_{1_{k+1}}
            \right)
            -
            \psi_1
            \left(
                \zeta_{1\rmd_{k+1}}  
            \right)
        \nn \\
        &=
            \psi_1
            \Bigl(
                \phi_1
                \bigl(
                    \SF_1(z_{1_k},z_{2_k})
                \bigr)
            \Bigr)
            -
            \psi_1
            \left(
                \zeta_{1\rmd_{k+1}}  
            \right)
    \nn
    \\
    &=
            \SF_1(z_{1_k},z_{2_k})
            -
            \psi_1
            \left(
                \zeta_{1\rmd_{k+1}}  
            \right),
        \nn
\end{align}
since $\psi_1$ is the inverse of $\phi_1$ on the admissible domain.
Imposing the first contraction condition \eqref{eq:first_BS_Step} with $\rho_1=0$ yields
\begin{align}
    \SF_1(z_{1_k},z_{2_k})
        =
            \psi_1
            \left(
                \zeta_{1\rmd_{k+1}}  
            \right).
\label{eq:SF1_equals_psi}
\end{align}
Next, it follows from the fact $\psi_1=\phi_1^{-1}$ and \eqref{eq:z1d} that 
    $\psi_1
    \left(
        \zeta_{1\rmd_{k+1}}  
    \right)
        =
            \chi_{1\rmd_{k+1}} .$
Substituting into \eqref{eq:SF1_equals_psi} yields
    $\SF_1(z_{1_k},z_{2_k})
        =
            \chi_{1\rmd_{k+1}} .$

Finally, since the desired trajectory satisfies 
$|x_{1\rmd_{k+1}}| < \overline{x}_1 ,$
it follows that 
$|\chi_{1\rmd_{k+1}}| < 1,$
which proves \eqref{eq:phi1_domain_requirement}.
\end{proof}

\subsection{Proof of Theorem \ref{thm:forward_invariance_phi_2}}
\label{proof:forward_invariance_phi_2}
\begin{proof}
It follows from \eqref{eq:second_BS_Step} that 
\begin{align}
    \SF_2(z_{1_k},z_{2_k},u_k)
        &=
            \rho_{2_k} e_{2_k}
            +
            \psi_2
            \left(
                \zeta_{2\rmd_{k+1}}   
            \right). \nn
    \label{eq:SF2_relation_theorem}
\end{align}
Hence, \eqref{eq:phi2_admissibility_transient_theorem} implies, for all $0 \leq k < k_\rms,$
    $\left|
        \SF_2(z_{1_k},z_{2_k},u_k)
    \right|
        <
            1.$ 
%
Therefore, the argument of $\phi_2$ remains within its admissible domain for all
$0 \leq k < k_\rms$.

Moreover, the second-step error dynamics satisfies \eqref{eq:e2_dynamics_theorem}.
Since $|\rho_{2_k}|<1$ for all $0 \leq k < k_\rms$, the error dynamics are
contractive during the transient phase. Hence, the admissibility condition of
$\phi_2$ is preserved throughout the transient phase.
\end{proof}

\subsection{Proof of Theorem \ref{thm:conditional_deadbeat_phi2}}
\label{proof:conditional_deadbeat_phi2}

\begin{proof}

Assume that the conditions of Theorem
\ref{thm:forward_invariance_phi_2} hold for all $0 \leq k < k_\rms$.
Suppose that, at time $k_\rms$, the closed-loop trajectory reaches a point for
which the deadbeat choice $\rho_{2_k}=0$ preserves the admissibility of the
argument of $\phi_2$.
Setting $\rho_{2_k}=0$ into \eqref{eq:second_BS_Step} yields
    $\SF_2(z_{1_k},z_{2_k},u_k)
        =
            \psi_2
            \left(
                \zeta_{2\rmd_{k+1}}   
            \right).$
Hence, by \eqref{eq:psi2condition}, the admissibility of the argument of $\phi_2$ is preserved for all $k \geq k_\rms$.

Furthermore, substituting $\rho_{2_k}=0$ into the error dynamics
\eqref{eq:e2_dynamics_theorem} implies that, for all, $k \geq k_\rms,$
    $e_{2_{k+1}}
        =
            0.$
Therefore, once the switching condition is satisfied, the second-step error is
driven to zero in one step and remains identically zero thereafter. This
establishes the deadbeat property of the second backstepping step.
\end{proof}

\printbibliography
\end{document}